\documentclass[12pt]{amsart}
\usepackage[utf8]{inputenc}

\makeatletter
\def\@settitle{\begin{center}%
		\baselineskip14\p@\relax
		\normalfont\Large
		\@title
	\end{center}%
}
\makeatletter
\def\@settitle{\begin{center}%
		\baselineskip14\p@\relax
		\normalfont\Large
		\@title
	\end{center}%
}
\makeatother

\usepackage{graphicx,epsfig}

\usepackage{bbm}
\usepackage[title]{appendix}
\usepackage{esvect}
\usepackage{tikz-cd}
\usepackage{fancyhdr}
\usepackage{amsfonts}
\usepackage{amsmath}
\usepackage{amssymb}
\usepackage{mathabx}
\usepackage{amsthm}
\usepackage{tikz-cd}
\usepackage{parskip}
\usepackage{times} 

\makeatletter
\newcommand\xleftrightarrow[2][]{%
  \ext@arrow 9999{\longleftrightarrowfill@}{#1}{#2}}
\newcommand\longleftrightarrowfill@{%
  \arrowfill@\leftarrow\relbar\rightarrow}
\makeatother

\usepackage{mathrsfs}
\usepackage{scalerel,amssymb}

\usepackage{amsmath,amssymb,latexsym, amsfonts, amscd, amsthm, xy}
\input{xy}
\xyoption{all}

\input{xy}
\xyoption{all}

\makeindex 

=650pt \headheight = 8pt \headsep = 10pt

\usepackage{fancyhdr}
\usepackage{amsfonts}
\usepackage{amsmath}
\usepackage{amssymb}

\usepackage{amsthm}
\usepackage{tikz}
\usepackage{tikz-cd}
\usepackage{parskip}
\usepackage{times} 

\usepackage[
bibstyle=alphabetic,
citestyle=alphabetic,
hyperref=true,
backref=false]{biblatex}
\usepackage{hyperref}
\hypersetup{pdftoolbar=true, pdftitle={GKform}, pdffitwindow=true, colorlinks=true, citecolor=green, filecolor=black, linkcolor=blue, urlcolor=blue, hypertexnames=false}
\usepackage{cleveref}
\makeindex 

\newcommand{\cal}{\mathcal}

\renewcommand{\rm}{\mathrm}

\newcommand{\Z}{\mathbb{Z}}
\newcommand{\Q}{\mathbb{Q}}
\newcommand{\R}{\mathbb{R}}
\newcommand{\C}{\mathbb{C}}

\newcommand{\G}{\mathbb{G}}

\newcommand{\GL}{\mathrm{GL}}
\newcommand{\St}{\mathrm{St}}
\newcommand{\OS}{\mathrm{OS}}

\DeclareMathSymbol{\mathinvertedexclamationmark}{\mathclose}{operators}{'074}
\DeclareMathSymbol{\mathexclamationmark}{\mathclose}{operators}{'041}

\makeatletter
\newcommand{\raisedmathinvertedexclamationmark}{%
  \mathclose{\mathpalette\raised@mathinvertedexclamationmark\relax}%
}
\newcommand{\raised@mathinvertedexclamationmark}[2]{%
  \raisebox{\depth}{$\m@th#1\mathinvertedexclamationmark$}%
}
\begingroup\lccode`~=`! \lowercase{\endgroup
  \def~}{\@ifnextchar`{\raisedmathinvertedexclamationmark\@gobble}{\mathexclamationmark}}
\mathcode`!="8000
\makeatother

=650pt \headheight = 8pt \headsep = 10pt

\newtheorem{thm}{Theorem}[section]
\newtheorem{prop}[thm]{Proposition}
\newtheorem{lem}[thm]{Lemma}

\theoremstyle{definition}
\newtheorem{rem}[thm]{Remark}

\numberwithin{equation}{section}
\title[A note on polyhedral cones and toric polylogarithms]{A note on polyhedral cones and toric polylogarithms}
\author{Peter Xu}

\usepackage{setspace}
\begin{document}

        \makeatletter
\@setabstract
\makeatother
\maketitle

    \begin{abstract}
        We extend some methods of our previous work on special elements in Milnor $K$-theory of algebraic tori, exhibiting in particular a $\GL_n(\Q)$-equivariant isomorphism between a chain complex of simplicial cones, computing the homology of $S^{n-1}$, and the trace-fixed part of the weight-$n$ Gersten complex for the Milnor $K$-theory of $\G_m^n$ over $\Q$. Via a relationship between graded pieces of algebras of cones and Steinberg modules, this refines a result of Charlton--Radchenko--Rudenko. 
    \end{abstract}

    \tableofcontents

    In Section 4 of \cite{CR}, Charlton--Radchenko--Rudenko give an injection from the Steinberg module $\St_n(\Q)$ for $\GL_n(\Q)$ into the ``top depth graded part'' of the rational Milnor $K$-theory of an algebraic $n$-torus $\G_m^n$ over $\Q$, as motivation for their treatment using a Steinberg symbol formalism for toric polylogarithms. In this short note, we show (Theorem \ref{thm:main}) that the methods of our previous article \cite{X2}, together with some classical results in algebraic geometry, suffice to prove that a certain ``oriented cover'' $C(n)$ of $\St_n(\Q)$ (coming from conical geometry) maps \emph{isomorphically} into the \emph{trace-fixed part} integral Milnor $K$-theory of $\G_m^n$ over $\Q$, and that these maps assemble into a morphism of complexes from a homology complex of an $(n-1)$-sphere to the motivic weight-$n$ Gersten complex of $\G_m^n$.

    This strongly suggests to us, as hinted in the introduction and appendices of \cite{CR}, that there is a close relation between $C(n)$ and the \emph{toric Aomoto polylogarithms} defined in loc. cit. Morally, we expect that integrating the de Rham regulators of our Milnor $K$-theory elements over linear simplices inside $\G_m^n$ should relate the latter polylogarithms with the Koszul dual of the homology of $S^{n-1}$, but making this precise outside of the case of general position and proving the necessary relations is the hard part; we are interested in exploring this in further work.
        
    We expect the same method proves analogous results parameterizing special trace-fixed motivic elements in Gersten complexes for universal Drinfeld $A$-modules (cf. \cite{SX}), as well as in the setting of elliptic curves (cf. \cite{X3}, \cite{X4}), though with Steinberg-like modules rather than spherical chains (and involving a somewhat more technical stabilization in the latter case).  
    
    \section{Filtrations on algebras of polyhedral cones}
    We first explain aspects of algebras of cones, extending considerations in \cite[Appendix A]{CR}. Parts of the following account we already explained in \cite[\S4]{X2}, and are standard material in convexity theory \cite{Bv}, with the exception of the pair of filtrations we construct. This section is mostly motivational, and explains why our main theorem refines that of \cite{CR}, though the precise relation between Steinberg modules and cone algebras may also be of independent interest.
    
    Let $V=\Q^n$, and let $\mathcal{C}(V)$ be the algebra of 
    functions on $V_\R$ generated by indicator functions $[C]$ of closed rational polyhedral cones $C\subset \R^n$, under pointwise multiplication and addition; these may be naturally decomposed into closed rational simplical cones. There is a natural ascending filtration by dimension of support 
    \[
    \Z = F_0\cal{C}(V) \subset F_1 \mathcal{C}(V) \subset F_1\mathcal{C}(V) \subset \ldots \subset F_{n} \mathcal{C}(V)
    \]
    for which $F_p\mathcal{C}(V)\cdot F_q\mathcal{C}(V) \subset F_{\min\{p,q\}} \mathcal{C}(V)$
    
    Following \cite[Chapter IV, \S1.5]{Bv},\footnote{Our source uses \emph{polarity} of cones instead of duality, but these differ only by the automorphism $-1$ of the ambient space $\R^n$.} there is also a second, different algebra structure $*$ on $\mathcal{C}(V)$ coming from \emph{convolution} of functions, such that 
    \[
        [C_1] * [C_2] = [C_1+C_2]
    \]
    for closed polyhedral cones $C_1$, $C_2$.
    Moreover, there is a duality involution
    \[
        D:\mathcal{C}(V) \to \cal{C}(V)
    \]
    defined on conical generators by $[C] \mapsto [C^\vee]$, where $C^\vee$ is the \emph{dual} cone consisting of all vectors pairing nonnegatively with all vectors in $C$. This duality involution exchanges the two algebra structures, in that $D(fg) = D(f)*D(g)$ for any $f,g\in \mathcal{C}(V)$. Under the action of $\GL_n(\Q)$ on $V$, we have $D\circ \gamma = (\gamma^{-1})^T \circ D$. 

    From the preceding descriptions, one sees that convolution product satisfies $F_p\cal{C}(V) * F_q\cal{C}(V) \subset F_{p+q}\cal{C}(V)$. We deduce also a second filtration 
    \[
    \Z = F^D_0 \mathcal{C}(V) \subset F^D_1 \mathcal{C}(V) \subset \ldots \subset F_n^D \cal{C}(V)
    \]
    by applying $D$ to the filtration $F_\bullet$. The conical dual of the set of dimension-at-most-$i$ closed cones is precisely the set of top-dimensional closed cones containing at least a dimension-$(n-i)$ subspace of $\R^n$ - these are what we called ``wedges'' in \cite{X2} -  so we may describe $F^D_i\cal{C}(V)$ as generated by such $(n-i)$-wedges. We formally deduce $F_p^D\cal{C}(V) \cdot  F_q^D\cal{C}(V) \subset F_{p+q}^D\cal{C}(V)$ and $F_p^D\cal{C}(V) *  F_q^D\cal{C}(V) \subset F_{\min\{p,q\}}^D\cal{C}(V)$ from duality.

    We therefore see that under pointwise multiplication, each $F_p$ is an ideal of $\mathcal{C}(V)$, and under convolution, each $F_p^D$ is an ideal of $\mathcal{C}(V)$, but neither has an associated graded algebra.
    
    We \emph{do} have an associated graded algebra under convolution $\rm{gr}\, \cal{C}(V)$ for the filtration $F$, and an associated graded algebra under pointwise multiplication $\rm{gr}^D\, \cal{C}(V)$, which are isomorphic via
    \[
        D: \rm{gr}\, \cal{C}(V) \to \rm{gr}^D\, \cal{C}(V).
    \]
    These graded algebras have systems of graded ideals $F^D_p$, respectively $F_p$, from descending these filtrations to the associated gradeds, which are identifed by $D$. We have a $\GL_n(\Q)$-equivariant isomorphism 
    \begin{equation} \label{eq:rel}
    \mathrm{gr}_n\, \mathcal{C}(V)/ F_{n-1}^D \xrightarrow{\sim} \St_n(\Q),
    \end{equation}
    given by sending
    \[
     [C(v_1,\ldots, v_n)]\mapsto \mathrm{sgn}\,\det(v_1,\ldots, v_n)[v_1,\ldots, v_n]
    \]
    as described in \cite[Proposition 91]{CR}. By duality, one also obtains an isomorphism
    \[
    \mathrm{gr}^D_n\, \mathcal{C}(V)/F_{n-1} \xrightarrow{\sim} \St_n(\Q)
    \]
    which is $\GL_n(\Q)$-equivariant only after twisting by the inverse transpose on one side. Composing these two identifications in top degree yields the involution $\delta: \St(n)\to \St(n)$ sending a basis to its dual basis, also used extensively in \cite{CR}.

    \section{Trace-fixed parts of Milnor $K$-theory} \label{appendix:a1}

    In \cite[\S4]{CR}, Charlton--Rudenko construct an injective map from $\St_n(\Q)$ to a quotient of the Milnor $K$-theory of $\Q(z_1,\ldots, z_n)$ (i.e. the function field of $\G_m^n$); as noted there, this map is closely related to a map we constructed in \cite{X2}. In this section, we observe that our previous construction actually \emph{identifies} a module of spherical chains (a sort of ``oriented cover'' of the Steinberg module) with the \emph{trace-fixed} subspace of this Milnor $K$-group - and, in fact, the more refined motivic cohomology groups coming from toric hyperplane complements. These maps assemble into maps from a spherical chain complex (which covers the Orlik--Solomon complex) into a Gersten--like complex for $\G_m^n$; this may be viewed as a motivic refinement of the work on the ``Orlik--Solomon model'' for the (Hodge) cohomology of hypersurface complements of Dupont \cite[Chapter 3]{Dup} in the particular setting of toric hyperplane arrangements.

    Let $C(n)$ be the augmented simplicial homology complex of the ind-triangulation of $S^{n-1}$ at all points corresponding to rational rays of \cite{X2} (where it is called $\widetilde{\mathrm{Chains}}(n)$). We have the following summary of the results of \cite[\S2.2]{X2}:

    \begin{prop}
        The module $C(n)_k$ is presented by symbols $\Delta(r_1,\ldots, r_{k+1})$ for rational rays $r_1,\ldots, r_{k+1}$ lying in the same strict hemisphere, antisymmetric in these rays, subject to the stellar subdivision relations
        \begin{equation}
            \sum_{i=0}^{k+1} (-1)^i \Delta(r_0, \ldots, \hat{r}_i,\ldots, r_k) = 0
        \end{equation}
        for any $r_0,\ldots, r_{k+1}$ lying in the same strict hemisphere of a $(k-1)$-dimensional great circle; here, we take any terms with linearly dependent rays to be zero.
    \end{prop}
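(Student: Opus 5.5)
The proposition is a statement about the homology complex of a direct limit of triangulations, so I will argue one triangulation at a time and then pass to the limit. Let $T$ be any finite rational triangulation of $S^{n-1}$, meaning every vertex is a rational ray and every simplex spans a rational simplicial cone. Let $C_k(T)$ be its oriented simplicial chains in degree $k$. The rational triangulations form a directed system under common refinement, with subdivision chain maps between them. These maps are quasi-isomorphisms, and $C(n)$ is by definition the colimit. Let $P_k$ be the abstract module on the symbols $\Delta(r_1,\dots,r_{k+1})$, antisymmetric in the rays and subject to the stellar relations. The plan is to build maps $P_k \to C(n)_k$ and $C(n)_k \to P_k$ and check that they are mutually inverse.

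\textbf{The map $P_k\to C(n)_k$.} Rays $r_1,\dots,r_{k+1}$ in a common strict hemisphere span a pointed simplicial cone, which cuts out a geodesic $k$-simplex on the sphere. Any finite family of such cones lies in a common rational triangulation. So I send $\Delta(r_1,\dots,r_{k+1})$ to the fundamental chain of its simplex, namely the signed sum of the $k$-simplices of $T$ that subdivide it, with orientation fixed by the ordering of the rays. If the rays are dependent, the simplex is degenerate and its fundamental chain is $0$, matching the convention in the statement. For the stellar relation, take $r_0,\dots,r_{k+1}$ in a strict hemisphere of a $k$-dimensional great sphere. They span a $(k+1)$-dimensional space, so the $k+2$ points satisfy a single (Radon-type) linear relation. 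This relation splits the family into two triangulations of the same spherical polytope, the convex hull of the rays. The alternating sum of the fundamental chains is therefore the difference of two fundamental chains of that polytope, which is $0$. The map is compatible with refinement, so it descends to the colimit.

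\textbf{The map $C(n)_k\to P_k$ and the inverses.} Every $k$-simplex of a rational triangulation has rational vertex rays lying in a strict hemisphere. This gives a map on generators $C_k(T)\to P_k$. It is compatible with subdivision maps provided that, in $P_k$, a symbol equals the sum of the symbols of any rational subdivision of its simplex. This is the crux.

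I would reduce to stellar subdivisions. Any two triangulations of a simplicial cone have a common refinement, and by the (weak) Oda / Morelli--W{\l}odarczyk factorization for simplicial fans, or more elementarily by the fact that any triangulation can be refined by iterated stellar subdivisions, it is enough to treat stellar subdivisions. A stellar subdivision at a ray $r_0$ inside a face is exactly the stellar relation applied to $r_0$ together with the vertices of the simplex, together with its restriction to the faces containing $r_0$. It remains to check the two composites. The composite $P_k\to C(n)_k\to P_k$ sends a symbol to the sum of the symbols of a subdivision of its simplex, which is the identity by the subdivision invariance just established. The composite $C(n)_k\to P_k\to C(n)_k$ is the identity on simplices by construction.

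\textbf{Main obstacle.} The hardest step is the subdivision invariance in $P_k$: showing that the stellar relations, together with the vanishing of degenerate symbols, generate all relations coming from arbitrary rational subdivisions. I would first try to avoid toric factorization theorems. The route would be to show directly that any two rational triangulations of a simplex are connected by stellar moves after refinement, via a common refinement obtained by overlaying hyperplanes and then pulling vertices one at a time. If that becomes messy, I would fall back on the known statement that every simplicial subdivision of a simplicial cone is dominated by an iterated stellar subdivision.
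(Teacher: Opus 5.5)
Your two maps are set up correctly, and the Radon-partition check that the stellar relation holds in $C(n)_k$ is fine. You also isolate the right crux: for every rational triangulation $L$ of a $k$-simplex $\sigma$ you need $\Delta(\sigma)=\Sigma(L)$ in $P_k$, where $\Sigma(L)$ is the sum of the $k$-simplices of $L$ with induced orientations. (The paper gives no argument of its own here; it simply quotes \S2.2 of \cite{X2}.)

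The reduction you propose for this step does not work in its ``more elementary'' form. The domination fact, that every triangulation $L$ of $\sigma$ is refined by some iterated stellar subdivision $S$ of $\sigma$, is true. But it only yields $\Delta(\sigma)=\Sigma(S)$. To pass from $\Sigma(S)$ to $\Sigma(L)$ you need $\Delta(\tau)=\Sigma(S|_\tau)$ for each $\tau\in L$. Since $S|_\tau$ is an arbitrary triangulation of $\tau$, not a stellar subdivision of $\tau$, the argument is circular. Repairing it that way would need a refinement that is simultaneously stellar over $\sigma$ and over $L$, which is a strong-factorization-type statement, not something you can simply quote. The weak factorization theorem of Morelli--W{\l}odarczyk (simplicial fans with the same support are joined by stellar subdivisions and their inverses) \emph{would} close the gap, since each move changes $\Sigma$ by a sum of stellar relations. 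But you say you want to avoid it, and it is far heavier than necessary.

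A short elementary proof is induction on $k$ by coning from a vertex.
\begin{itemize}
\item Write $\sigma=(v_0,\ldots,v_k)$ and $F_0=(v_1,\ldots,v_k)$. Work in the group presented using only rays in the pointed cone over $\sigma$, so every relation used satisfies the hemisphere condition; it maps to $P_k$.
\item For $\tau=(w_0,\ldots,w_k)\in L$, the stellar relation on $v_0,w_0,\ldots,w_k$ gives $\Delta(\tau)=\sum_i(-1)^i\Delta(v_0,w_0,\ldots,\hat w_i,\ldots,w_k)$. If $v_0$ is a vertex of $\tau$, plain antisymmetry gives the same identity.
\item Sum over $\tau$. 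Interior $(k-1)$-faces cancel in pairs by antisymmetry. Boundary faces lying in facets through $v_0$ give degenerate symbols. What remains is the cone from $v_0$ on the induced triangulation $L|_{F_0}$.
\item Coning with $v_0$ carries each degree-$(k-1)$ stellar relation among rays of the cone over $F_0$ to a degree-$k$ stellar relation. The extra term $\Delta(r_0,\ldots,r_k)$ has all its rays in the $k$-dimensional span of $F_0$, so it vanishes.
\item The inductive identity $\Delta(F_0)=\Sigma(L|_{F_0})$ therefore cones to $\Sigma(L)=\Delta(v_0,v_1,\ldots,v_k)=\Delta(\sigma)$.
\end{itemize}
Putting this in place of the stellar-subdivision reduction makes your proposal a complete proof, with no factorization theorem needed.
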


    \begin{rem}
    In fact, though we did not realize this at the time of writing \cite{X2}, only the \emph{rank $2$ stellar subdivision relations} (where $r_0$ lies on the great circle through $r_1$ and $r_2$) are necessary, as any simplicial subdivision can be barycentrically completed, resulting in its decomposition into simplicial subdivisions for which the subdividing vertex lies on an edge of a simplex. This is the analogue of the Bykovskii-type/quadratic generation results for spherical chains.
    \end{rem}

    As we observed previously in \cite[\S4]{X2}, we have a $\GL_n(\Q)$-equivariant isomorphism
    \[
     \rm{gr}^n\,\cal{C}(V) \xrightarrow{\sim} C(n)_{n-1},\; [C(v_1,\ldots, v_n)] \mapsto \mathrm{sgn}\,\det(v_1,\ldots, v_n) [S^{n-1} \cap C(v_1,\ldots, v_n)]
    \]
    for linearly independent $v_1,\ldots, v_n$.\footnote{There is no natural way to extend this to an isomorphism of graded modules, since the natural $\GL_n(\Q)$-action on functions of closed cones does not see the sign coming from orientation reversal: one could make a lot of choices of orientations and do so, and thereby get a cdga structure on both sides by transport of structure applied to the differential (from the chains side) and the product (from the cone algebra side), but it would be badly non-equivariant.}
    
    There is an equivariant surjection of cdgas $C(n)[1]\to \OS_n(\Q)$, which we also defined in loc. cit., given by sending $\Delta(r_1,\ldots, r_k)\mapsto [\Q r_1,\ldots, \Q r_k]$ where here our notation indicates the rational line parallel to the respective ray, killing the unique nonzero homology class in degree $n$ of $C(n)[1]$ coming from the fundamental class of $S^{n-1}$; under $\rm{gr}\,\cal{C}(V) \xrightarrow{\sim} C(n)[1],$, this is identified with the isomorphism $\rm{gr}\,\cal{C}(V)/F^D_{n-1} \xrightarrow{\sim} \OS_n(\Q)$ of the previous section.
    
    The scheme $T:=\G_m^n=\mathrm{Spec}\,\Z[z_1^{\pm},\ldots, z_n^{\pm}]$ has Gersten complex in Milnor $K$-theory
    \[
    \mathcal{K}(T):=(K_n \xrightarrow{\partial} K_{n-1} \xrightarrow{\partial} \ldots \xrightarrow{\partial} K_0)
    \]
    where $K_i := \bigoplus_{x\in T^{[n-i]}} K_i^M(k(x))$ with $T^{[n-i]}$ is the set of codimension-$(n-i)$ points of $T$, and $k(x)$ the residue field of $x$; the connecting arrows are residue/tame symbol maps. For each matrix $M\in M_{k\times n}(\Z)$, we have a finite isogeny $M:\G_m^k\to T$ given by pushforward; this is a finite map onto a dimension-$k$ (relatively, over $\Z$) sub-algebraic torus when $M$ is rank $k$ over $\Q$. We write a superscript $(0)$ on (co)homology groups attached to $\G_m^n$ to indicate the submodule on which scalar matrices $[a],a\in \mathbb{N}\setminus \{0\}$ act by $1$; such groups attain a pushforward action of $\GL_n(\Q)$ and not just $\GL_n(\Z)$.

    It is then shown in \cite[Theorem 3.1]{X2} that there is a $\GL_n(\Q)$-equivariant map of chain complexes
    \[
        \widetilde{\Psi}: C(n)[1]\to \mathcal{K}(T)^{(0)}, \;\Delta(r_1,\ldots, r_k) \mapsto \begin{pmatrix} r_1 & \ldots & r_k\end{pmatrix}_* \{ 1-z_1,\ldots, 1-z_k\}.
    \]

    \begin{rem}
    Note that in top degree $\widetilde{\Psi}$ \emph{does not} precisely lift the map of \cite[Theorem 46]{CR} from the Steinberg symbol, which sends a unimodular symbol to
    \[
    [v_1,\ldots, v_n] \mapsto \{1-\underline{z}^{v_1}\,\ldots, 1-\underline{z}^{v_n}\}
    \]
    (after quotienting by relations in the target). Our map is instead the composite of theirs by the involution $\delta: \St(n)\to \St(n)$.
    \end{rem}
    
    We consider the subcomplex $K(T_\Q)$ of $K(T)$ associated to the rationalization of the torus $T$; i.e., omitting points corresponding to subvarieties which are not flat over $\Z$. The image of $\widetilde{\Psi}$ visibly lands in $K(T_\Q)$, and we have the following refinement of the injectivity statement of Charlton--Rudenko:

    \begin{thm} \label{thm:main}
        The map $\widetilde{\Psi}$ yields an isomorphism between $C(n)[1]$ and $K(T_\Q)^{(0)}$. 
    \end{thm}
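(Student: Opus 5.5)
We prove that $\widetilde{\Psi}$ is an isomorphism degree by degree, by filtering both sides according to the ``linear type'' of the support. On the source, a generator $\Delta(r_1,\ldots,r_k)$ of $C(n)_{k-1}$ spans a rational $k$-dimensional subspace $W=\mathrm{span}(r_1,\ldots,r_k)\subset V$. On the target, a point $x\in T_\Q^{[n-k]}$ whose image carries a nonzero trace-fixed class should be supported on (a translate of) the subtorus $T_W$ cut out by $W$. The first step is to use the $(0)$-projector, i.e.\ averaging under pushforward along $[a]$, to reduce $K(T_\Q)^{(0)}_k$ to contributions from generic points of subtori $T_W$ with $W$ rational of dimension $k$. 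A point $x$ not equal to such a generic point has a residue field that is not ``toric''. Torsion translates are permuted by $[a]$, so after applying the projector they collapse onto the identity component. The expected tool here is rigidity of trace-fixed (weight-zero for isogenies) parts: the eigenvalue argument used in \cite{X2}, combined with Beilinson--Soulé-type vanishing of the $[a]$-eigenspaces with eigenvalue $1$ on non-toric pieces.

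The second step is to decompose both sides as direct sums over $W$, modulo the differential. On the source side, $C(n)_{k-1}$ is the sum over rational $W$ of $k$-dimensional simplices spanning $W$. By the stellar subdivision presentation (the Proposition above), the part attached to $W$ is exactly the top chains $C(W)_{k-1}\cong \mathrm{gr}^k\mathcal{C}(W)$ of the sphere $S(W_\R)$. On the target side, we need the analogous identification
\[
K^M_k\bigl(\Q(T_W)\bigr)^{(0)}\cong C(k)_{k-1}
\]
after choosing $W\cong\Q^k$. This reduces everything to the top-degree statement for a $k$-torus: the map $\Delta(r_1,\ldots,r_k)\mapsto (r_1\cdots r_k)_*\{1-z_1,\ldots,1-z_k\}$ is an isomorphism from $C(k)_{k-1}$ onto the trace-fixed part of $K^M_k(\Q(z_1,\ldots,z_k))$. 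The pushforwards along the isogenies $W\to$ subtorus supply the identification of the pieces. Compatibility with $\partial$ is already given by \cite[Theorem 3.1]{X2}.

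For the top-degree statement I would argue by induction on $k$ using the residue sequence
\[
0\to K^M_k(\Q(\G_m^{k-1}))\to \ker\Bigl(K^M_k(\Q(\G_m^k))\to \textstyle\bigoplus_x K^M_{k-1}(k(x))\Bigr)\to \cdots,
\]
with Milnor's exact sequence for $F(t)$ used one variable at a time. Injectivity should follow as in Charlton--Rudenko: compose with the total residue to the $0$-dimensional strata, i.e.\ the torsion points. Iterated residues of $\{1-\underline z^{v_1},\ldots\}$ land at roots of unity, and the resulting ``corner'' map recovers the cone function on $\mathrm{gr}^k\mathcal{C}$, which is faithful by the isomorphism $\mathrm{gr}^n\mathcal{C}(V)\cong C(n)_{n-1}$. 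Surjectivity onto the trace-fixed part is the step I expect to be the main obstacle. The plan is to show that $K^M_k(\Q(T))^{(0)}$ is generated by pushforwards of symbols $\{1-z_1,\ldots,1-z_k\}$, i.e.\ by products of cyclotomic-type units along codimension-one subtori. For this I would first show that the trace-fixed part is killed modulo these generators by all residues at non-toric divisors; this uses the first step applied to divisors. The remaining classes are then unramified outside toric divisors, hence, by induction, come from $K^M_k$ of the torus itself, i.e.\ from symbols in the $z_i$ and constants. Constants and pure monomial symbols are not trace-fixed (scalars $[a]$ multiply $\{z_{i_1},\ldots\}$ by $a^k$ and act on $K^M_*(\Q)$ trivially, but the projector with its degree normalization kills them), which finishes the argument.
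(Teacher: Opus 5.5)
\textbf{There is a genuine gap: your top-degree step ignores the unramified classes, so neither your injectivity nor your surjectivity argument works as stated.} The overall architecture is the paper's: trace-fixed classes live only at generic points of subgroup tori, both sides split over rational subspaces $W$, and everything reduces by induction to the top-degree map $C(k)_{k-1}\to K^M_k(\Q(z_1,\ldots,z_k))^{(0)}$.

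\textbf{Injectivity.} Your argument via residues down to torsion points cannot be faithful on $C(k)_{k-1}$. The fundamental class $[S^{k-1}]$ is a nonzero cycle, and its image $\{-z_1,\ldots,-z_k\}$ has all entries units on $\G_m^k$. So every residue, iterated or not, at every point of the torus kills it. Already for $k=1$ you get $\widetilde\Psi(\Delta(+)-\Delta(-))=\{1-z\}-\{1-z^{-1}\}=\{-z\}$.

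\textbf{Surjectivity.} Your endgame asserts that unramified classes contribute nothing trace-fixed, and this is false. Pushforward $[a]_*$ multiplies $c\cdot\{-z_{i_1},\ldots,-z_{i_j}\}$, with $c\in K^M_{k-j}(\Q)$, by $a^{k-j}$, the degree in the remaining variables; it does not act by $a^k$ or trivially as you state. So exactly the top symbol $\{-z_1,\ldots,-z_k\}$ survives.

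\textbf{First missing ingredient.} You need the paper's second lemma: the kernel of $K_k^{(0)}\to K_{k-1}^{(0)}$ is free of rank one on $\{-z_1,\ldots,-z_k\}$. It is proved from Milnor's split exact sequence for $F(z_k)$ by taking the residue at $z_k=0$, a divisor \emph{outside} the torus. The inverse of that residue is cup product with $-z_k$, and one inducts. You then combine this with the computation in \cite[\S3.1]{X2} that $[S^{k-1}]\mapsto\{-z_1,\ldots,-z_k\}$.

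\textbf{Second missing ingredient.} This is what lets you remove ramification along the \emph{toric} divisors, which your sketch does not address. The residue vector of a trace-fixed class is a cycle in the lower-degree part of the complex, which induction identifies with $C(k)_{\le k-2}$. It lifts to $C(k)_{k-1}$ only because the augmented sphere complex is acyclic there: $\tilde H_{k-2}(S^{k-1})=0$. With both ingredients, the proof is a short five-lemma argument on the square $C(k)_{k-1}\to C(k)_{k-2}$ over $K_k^{(0)}\to K_{k-1}^{(0)}$, matching the row kernels $\Z[S^{k-1}]$ and $\Z\{-z_1,\ldots,-z_k\}$.

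\textbf{Your first step is also not justified as written.} There is no integral averaging projector over the monoid of $[a]$'s. Classes on non-identity torsion cosets are \emph{excluded} from the trace-fixed part, not ``collapsed'' onto identity components. Beilinson--Soul\'e is beside the point. The paper's support lemma is elementary:
\begin{itemize}
\item The support $C$ of a trace-fixed class satisfies $[a](C)=C$ for all $a$.
\item The powers of a point with multiplicatively independent coordinates are Zariski dense, which forces each component into a torsion subtorus (inductively in $n$).
\item A component $\zeta+H$ with $\zeta$ of order $c>1$ would force infinitely many components among its $[c^m]$-preimages.
\end{itemize}
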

    \begin{proof}
    We proceed by induction: when $n=0$, this is the trivial assertion that $\Z$ is isomorphic to $\Z$. Assume now that we know the assertion for $n-1$: by the lemmas below, we have a commutative diagram 
    \begin{equation}
        \begin{tikzcd}
            C(n)_{n-1} \arrow[r,"\partial"] \arrow[d,"\widetilde{\Psi}"] & C(n)_{n-2} \arrow[d]\arrow[d,"\widetilde{\Psi}"] \\
            K_n^M(\G_m^n)^{(0)} \arrow[r,"\partial"] & \bigoplus_{A\in } K_{n-1}^M(\G_m^{n-1})^{(0)}
        \end{tikzcd}
    \end{equation}
    where the kernel of the top row is the fundamental class $[S^{n-1}]$, and the kernel of the bottom row is $\Z \cdot \{-z_1,\ldots, -z_n\}$. By the inductive hypothesis, the right vertical arrow is an isomorphism, so to show the same for the left, it suffices to see that $[S^{n-1}]$ maps to $\{-z_1,\ldots, -z_n\}$, which we showed in \cite[\S3.1]{X2}.
    \end{proof}

    In light of \eqref{eq:rel}, our theorem is a direct refinement of \cite[Theorem 46, Proposition 47]{CR}. We conclude by proving the lemmas used in the above proof:

    \begin{lem}
        The module $K_i^{(0)}$ in $K(T_\Q)^{(0)}$ is supported only on summands coming from the generic points of \emph{subgroup tori}: that is, ``linear'' embeddings $(\G_m^i)_\Q \to T_\Q$ associated to $i$ by $n$ matrices. In particular, we may identify the term
        \[
            \left(\bigoplus_{x\in T_\Q^{[n-i]}} K_i^M(k(x))\right)^{(0)}= \bigoplus_{A\in } K_i^M(\G_m^{i})^{(0)}
        \]
        in $K(T_\Q)^{(0)}$. 
    \end{lem}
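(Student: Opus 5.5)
We must show that the trace-fixed part (scalar isogenies $[a]$ acting by $1$) of $\bigoplus_{x\in T_\Q^{[n-i]}} K_i^M(k(x))$ is concentrated on generic points of subgroup tori. The plan is to exploit the dynamics of the maps $[a]$ on the set of points of $T_\Q$: the pushforward $[a]_*$ permutes summands according to $x\mapsto [a](x)$. An element fixed by every $[a]_*$ must therefore be supported on a set of points $S$ with $[a](S)\subset S$ for all $a$. Moreover, since the element lies in a finite direct sum, $S$ is finite. We will also use that $[a]_*$ maps $K_i^M(k(x))$ into $K_i^M(k([a]x))$ via the norm for the finite extension $k(x)/k([a]x)$.

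\textbf{Step 1 (finite invariant sets of subvarieties).} Let $\xi=\sum_x \xi_x$ be fixed, and choose $x$ in the support with $\xi_x\neq 0$. For each $a$, the coordinate $\xi_{[a]x}$ of $[a]_*\xi=\xi$ receives contributions from all $y$ in the support with $[a]y=[a]x$. Taking $Z=\overline{\{x\}}$ and a prime $a=p$, we argue that the orbit $\{[p^m]Z\}_m$ lies in the finite support of $\xi$. Hence $[p^m]Z=[p^{m'}]Z$ for some $m<m'$, and after replacing $Z$ by $[p^m]Z$, $Z$ is a periodic subvariety of the multiplication-by-$p^{m'-m}$ map. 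The main input is then the classical theorem (Laurent's theorem on torsion points, or more directly the structure of subvarieties invariant under $[N]$, $N\ge2$, in characteristic zero) that an irreducible subvariety $Z\subset T_{\bar\Q}$ with $[N]Z=Z$ is a torsion translate of a subtorus. Over $\Q$, irreducible components give Galois orbits of torsion translates $\zeta H$.

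\textbf{Step 2 (removing torsion translates).} Here we need to show that the fixed element has no component on $\zeta H$ with $\zeta\notin H$. Since the support is invariant and finite under all $[a]$, taking $a$ divisible by the order of $\zeta$ sends $\zeta H$ to $H$. Conversely, we need to show that pushforward from $[a]^{-1}(H)$, which contains the translates, forces the coefficient on the nontrivial translates to vanish. The approach is to use a downward induction over the support ordered by the order of torsion: if $\zeta H$ has order $d>1$ maximal in the support, then $\zeta H$ is not in the image of any $[a]$ restricted to the support. This holds because an image $[a](\zeta' H)$ has torsion order dividing $\mathrm{ord}(\zeta')/\gcd$, which is strictly less unless $a$ is coprime, and we choose $a$ with a prime factor dividing $d$. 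Comparing the $\zeta H$ coordinate of $[a]_*\xi=\xi$ for such $a$ then gives $\xi_{\zeta H}=0$. Care is needed because preimages of torsion translates under $[a]$ are unions of torsion translates of possibly higher order; the ordering argument must be carried out on the finite support. I expect this bookkeeping, together with the precise citation of the invariant-subvariety theorem, to be the main obstacle. In particular, we must check that the relevant order can only go down under $[p]$ for $p\mid d$.

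\textbf{Step 3 (identification).} Once the support consists of generic points of subgroup tori $H$, we parametrize each such $H$ by an isogeny $\G_m^i\to H$ coming from a rank-$i$ integer $i\times n$ matrix $A$; here $A$ ranges over primitive sublattice choices, or equivalently over rational $i$-dimensional subspaces. The function field $k(\eta_H)$ is identified with that of $\G_m^i$, and the trace-fixed condition restricts to the trace-fixed condition on $K_i^M(\G_m^i)$, because $[a]$ preserves $H$ and acts on it as $[a]$. This gives the displayed identification $\bigoplus_A K_i^M(\G_m^i)^{(0)}$.
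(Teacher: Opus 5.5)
Your outline has the same skeleton as the paper. The support of a trace-fixed element is stable under the $[a]$; stable subvarieties are torsion cosets $\zeta H$; nontrivial torsion is excluded by an order argument; and the identification with $\bigoplus_A K_i^M(\G_m^i)^{(0)}$ is then formal, since $[a]$ preserves each subgroup torus and acts on it as $[a]$.

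The real difference is the classification step. You cite the theorem that an irreducible $Z$ with $[N]Z=Z$ for a \emph{single} $N\ge 2$ is a torsion coset. That theorem is true; one proof is a degree count showing $|\mathrm{Stab}(Z)\cap T[N^k]|=N^{k\dim Z}$, which forces $\dim \mathrm{Stab}(Z)\ge \dim Z$. Laurent's theorem alone does not give it, though, unless you first show that torsion points are dense in $Z$. The paper instead uses invariance under \emph{all} $[k]$ at once, which makes the argument elementary. A point of $Z(\C)$ with no multiplicative relation has Zariski-dense powers (by the Vandermonde argument), and all these powers lie in $C$, so $\dim C=n$. Otherwise $Z$ lies in a codimension-one torsion coset and one inducts on $n$. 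Your route is shorter but imports an external theorem; the paper's is self-contained.

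For torsion elimination you take $d>1$ maximal and show $\zeta H$ has no $[p]$-preimage in the support when $p\mid d$. The paper takes order $c>1$ and produces $[c^k]$-preimage components of order $c^{k+1}$, infinitely many of them. Both rest on the same fact, that $[p]$-preimages of an order-$d$ coset with $p\mid d$ have order $pd$. Yours has the small advantage of reading off $\xi_{\zeta H}=0$ directly from $[p]_*\xi=\xi$.

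One step needs repair. In Step 1 you claim that $[p^m]Z$ stays in the support, but this does not follow from looking at the $[a]x$-coordinate of $[a]_*\xi$. Norms from distinct points $y$ of the support with $[a]y=[a]x$, for instance $a$-torsion translates of one another, could a priori cancel. The paper's first paragraph gives what you need. Let $S$ be the finite support. Then $S=\mathrm{supp}([a]_*\xi)\subseteq [a](S)$, and $|[a](S)|\le |S|$ because $[a]$ is finite and preserves dimension. Hence $[a]$ permutes $S$. In particular it is injective on $S$, so no cancellation actually occurs. This makes every support point periodic directly, with no need to replace $Z$ by $[p^m]Z$ and pull back afterwards.
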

    \begin{proof}    
        Suppose a trace-fixed element $s$ is supported on a union $C$ of dimension-$i$ closed cycles. Then $[a]_*s=s$ is necessarily supported on a subscheme of $[a]_*C$, so we have $C\subset [a]_*C$ for any $a\in \mathbb{N}$. Since $C$ is a union of finitely many irreducible components and each $[a]_*$ is a finite map, we must actually have $C = [a]_*C$ for each $a$, with each of these isogenies permuting the irreducible components. 
        
        We claim the only such cycles are unions of algebraic tori subgroups, as in the lemma statement: in fact, we claim this is even true for the complexification $T_\C$. Let a \emph{torsion subtorus} of $\G_m^n$ to be a subvariety of the form $\zeta + H$ for a subgroup torus $H\subset \G_m^n$ and a torsion point $\zeta$. 
        
        We claim that it suffices to show $C$ is a union of torsion subtori of dimension $i$. Indeed, suppose that we have some torsion subtorus $\zeta+H$ as a component of $C$, for $\zeta$ of minimal order $c$. Then $C$ must contain a component of $[a]^{-1}(\zeta+H)$ for every $a\in \mathbf{N}$, in particular for $a=c^k$ for all $k\ge 0$. These components are precisely $\zeta'+H$ as $\zeta'$ ranges over $c^k$-th roots of $\zeta$; if $c>1$, these cosets are all distinct as $\zeta$ and $k$ vary, because the $[c^k]$-preimages of the class of the $c$-torsion point $\zeta$ in $\G_m^n/H$ are all distinct - this forces $C$ to have infinitely many components, a contradiction. We conclude that $c=1$, and thus $C$ is a union of subgroup tori of dimension $i$.

        We now prove that the components of any $i$-dimensional $C$ are torsion subtori of dimension $i$ for $0\le i \le n$, by induction on $n$; the case $n=0$ is clear. Assuming the inductive hypothesis for $n-1$, we consider a trace-fixed $C$ of dimension $1\le i \le n$; if $i=n$, the result is obvious. Otherwise, if each component of $C$ is contained in a torsion subtorus of dimension $n-1$, the result follows by the inductive hypothesis. 
        
        Finally, if there is a component of $C$ not contained in any torsion subtorus of dimension $n-1$, this means this component contains a point $x=(x_1,\ldots, x_n)$ such that there is no non-zero vector $(v_1,\ldots, v_n)\in \Z^n$ for which $x_1^{v_1}\cdot \ldots \cdot x_n^{v_n}=1$. But then the powers $x^k$ of this point are all also contained in $C$, and these are Zariski dense in $\G_m$: to see this, suppose that
        \[
            f(z_1,\ldots, z_n) = \sum_{\underline{i}\in S\subset \Z_{\ge 0}^n} a_{\underline{i}} z^{\underline{i}}
        \]
        vanishes on all $x^k$. Note that no two terms of the form $x^{k\underline{i}}$ are equal by hypothesis; thus, the $|S|$ by $|S|$ Vandermonde matrix of the first $|S|$ powers of the set $(x^{\underline{i}})_{\underline{i}\in S}$ is invertible, and the linear relations $(f(x^{k \underline{i}}))_{\underline{i}\in S}$ for $k=1,2,\ldots, |S|$ imply that every coefficient $a_{\underline{i}}=0$, meaning that only the zero polynomial vanishes on all powers of $x$. Thus, this case cannot occur.
    \end{proof}
    
    \begin{lem}
    The kernel of 
    \[
        K_n^{(0)} \to K_{n-1}^{(0)}
    \]
    in $K(T_\Q)$ is the free rank-$1$ submodule spanned by $\{-z_1,\ldots, -z_n\}$.
    \end{lem}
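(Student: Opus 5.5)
The plan is to identify the kernel of the residue map $K_n^{(0)}\to K_{n-1}^{(0)}$ with the trace-fixed part of unramified Milnor $K$-theory, and then compute that group using a known description of it. By the previous lemma, $K_{n-1}^{(0)}$ consists of the trace-fixed parts of the residue targets along divisors that are generic points of codimension-one subgroup tori. However, the kernel of $\partial$ on $K_n^{(0)}$ is a priori cut out by residues at \emph{all} codimension-one points of $T_\Q$. I will first check that a trace-fixed element $s\in K_n^M(\Q(z_1,\ldots,z_n))^{(0)}$ automatically has $\partial s$ trace-fixed, since $\partial$ commutes with the isogenies $[a]_*$. The first part of the proof of the previous lemma, which only uses $[a]_*$-stability of supports, then shows that $\partial s$ is supported on subgroup tori. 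So $\ker\partial$ equals $H^0_{\mathrm{Zar}}(T_\Q,\mathcal{K}^M_n)^{(0)}$, the trace-fixed part of the unramified Milnor $K$-group of $\G_{m,\Q}^n$.

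Next I will compute the unramified group. I would invoke the classical homotopy-invariance and Gersten results for Milnor $K$-theory, i.e. the computation $H^0(\G_m^n,\mathcal{K}_n^M)\cong \bigoplus_{S\subset\{1,\ldots,n\}} K^M_{n-|S|}(\Q)\cdot\{z_S\}$. This follows by induction on $n$ from the localization sequence for $\mathbb{A}^1\setminus 0$ over a field, together with $\mathbb{A}^1$-invariance of the Gersten complex, as in Milnor's computation of $K^M(F(t))$. The group is then a free $K^M_*(\Q)$-module on the symbols $\{z_{s_1},\ldots,z_{s_j}\}$.

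Then I will take trace-fixed parts. The pushforward $[a]_*$ acts on $\{z_S\}\cdot\alpha$, with $\alpha\in K^M_{n-|S|}(\Q)$, by the norm along $z\mapsto z^a$. It multiplies $\alpha\{z_S\}$ by $a^{n-|S|}$, since $\{z_S\}$ is pulled back up to the factor $a^{|S|}$ and the degree of $[a]$ is $a^n$. In particular $[a]_*$ is injective, and the $(0)$-eigenspace, computed in the ambient group, consists of elements in the part with $|S|=n$, namely $\Z\cdot\{z_1,\ldots,z_n\}$, plus corrections from the torsion in $K^M_*(\Q)$. For example, $\{-1\}$ is $2$-torsion, and it is precisely this that forces the generator to be $\{-z_1,\ldots,-z_n\}$ rather than $\{z_1,\ldots,z_n\}$.

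The main obstacle is this last eigenspace computation. Because $K^M_*(\Q)$ has torsion (the symbols $\{-1\}$, and tame pieces in the residue fields $\mathbb{F}_p$), one must check carefully that the trace-fixed condition for all $a$ simultaneously kills every summand except the line spanned by $\{-z_1,\ldots,-z_n\}$. The expansion of $\{-z_1,\ldots,-z_n\}=\prod_i(\{-1\}+\{z_i\})$ involves the lower terms $\{-1\}^{n-|S|}\{z_S\}$. Here I would use that $[a]_*$ acts on these terms by $a^{n-|S|}$ times something, combined with the $2$-torsion of $\{-1\}$, and compare odd and even $a$. I would also verify directly, as in \cite[\S3.1]{X2}, that $\{-z_1,\ldots,-z_n\}$ is trace-fixed, using $\prod_{\zeta^a=1}(-\zeta z)=-z^a$ in each variable. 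Freeness of rank one then follows because $\{z_1,\ldots,z_n\}$ has infinite order, as its iterated residue is $1\in\Z$.
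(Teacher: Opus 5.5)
Your route is correct and genuinely different from the paper's. The paper never computes the full unramified group. It inducts on $n$ via Milnor's split sequence for $F(z_n)$, $F=\Q(z_1,\ldots,z_{n-1})$, and shows that the residue $\partial_{\{z_n=0\}}$ carries the trace-fixed kernel isomorphically onto the trace-fixed kernel in $n-1$ variables, with inverse $y\mapsto y\cdot\{-z_n\}$. It disposes of the $\iota_*K^M_n(F)$-component with a one-line ``transversality'' remark. You instead take the structure theorem $H^0(\G^n_{m,\Q},\mathcal{K}^M_n)=\bigoplus_S K^M_{n-|S|}(\Q)\{z_S\}$ as input and compute the eigenspace directly. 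The paper's version is shorter and produces the inverse in exactly the form used in the induction for the main theorem. Yours makes the torsion in $K^M_*(\Q)$ explicit, which is what the transversality remark glosses over. On $\iota_*(z)$ the scalar $a$ from $\mathrm{diag}(1,\ldots,1,a)_*$ is combined with $[a]'_*$, the $a$-th power map on the first $n-1$ coordinates, so fixedness only gives $z=a\,[a]'_*z$; concluding $z=0$ needs information about $z$ of the kind your computation supplies. Your Step 1 is more than needed: the map $K_n^{(0)}\to K_{n-1}^{(0)}$ is a restriction of $\partial$, so its kernel is $\ker\partial\cap K_n^{(0)}$ tautologically, with no support argument required.

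Two intermediate claims are wrong, though neither is needed. First, the identity $[a]_*[a]^*=a^n$ only determines $[a]_*(\alpha\{z_S\})$ up to $a^{|S|}$-torsion. The actual formula, obtained one variable at a time from the projection formula and $N_{\Q(z)/\Q(z^a)}(z)=(-1)^{a+1}z^a$, is $[a]_*(\alpha\{z_S\})=a^{n-|S|}\alpha\prod_{i\in S}\{(-1)^{a+1}z_i\}$. For $|S|<n$ this equals $a^{n-|S|}\alpha\{z_S\}$ exactly, since for even $a$ the extra terms carry a factor $a^{n-|S|}\{-1\}=0$. For $|S|=n$ and even $a$ it does not, which is exactly why $\{z_1,\ldots,z_n\}$ is not fixed. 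Second, $[a]_*$ is not injective: $[2]_*$ kills $\{-1\}^{n-|S|}\{z_S\}$ for $|S|<n$; for example $[2]_*\{-1\}=0$ when $n=1$.

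With the correct formula, your final step closes cleanly. Subtract $c\{-z_1,\ldots,-z_n\}$, where $c$ is the top coefficient. The remainder $\sum_{|S|<n}\beta_S\{z_S\}$ is fixed iff $(a^{n-|S|}-1)\beta_S=0$ for all $a$. Since no prime $\ell$ divides $\ell^m-1$, these integers generate the unit ideal, so every $\beta_S=0$. A single value of $a$ would not suffice: for $a=2$ and $n-|S|=2$ one only gets $3\beta_S=0$, while $\mathbb{F}_7^\times\subset K_2^M(\Q)$ has $3$-torsion.
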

    \begin{proof}
        We proceed by induction on $n$. When $n=1$, the kernel of 
        \[
        K_1^M(\Q(z))\xrightarrow{\partial} \bigoplus_{x\in \G_m^{[1]}} K_0^M(k(x))
        \]
        is the rational functions with no poles at any points of $\G_m$; certainly these are only of the form $C\cdot z^k$ for some constant $C$ and integer exponent $k$. One can compute that the trace-fixed ones are precisely of the form $(-z)^\Z$.
        
        A result of Milnor (see, e.g., \cite[Theorem 7.2.1]{GS}) tells us that for a field $F$, we have an exact sequence
        \[
        0\to K_n^M(F)\xrightarrow{\iota_*} K_n^M(F(t)) \xrightarrow{\partial} \bigoplus_{f\text{ monic irred}} K_{n-1}^M(F[t]/(f))\to 0
        \]
        split by a ``specialization at infinity'' map $K_n^M(F(t))\to K_n^M(F)$ sending $-t$ to zero when $n=1$. Take now $F=\Q(z_1,\ldots, z_{n-1}$, and relabel the parameter $t$ as $z_n$, so that the middle term becomes $K_n^{(0)}$ as in the lemma statement. Observe that the image of $\iota_*$ is transverse to $K_n^{(0)}$, since the factor of $[a]$ given by the diagonal matrix $\mathrm{diag}(1,1,\ldots, ,1,a)_*$ acts on it by the scalar $a$, and that any element in $\ker \left(K_n^{(0)}\to K_{n-1}^{(0)}\right)$ definitionally has trivial image in every summand on the right except possibly $f=(z_n)$, the sole codimension-$1$ point which does not restrict to a codimension-$1$ point of $\G_m^n$. Moreover, its image in $K_{n-1}^M(\G_m^{n-1})^{(0)} \cong K_{n-1}^M(F[z_n]/(z_n))^{(0)}$ likewise has trivial tame symbol at every codimension $1$ point, since every codimension-$1$ subvariety of $\G_m^{n-1}$ is the intersection of the hyperplane with a codimension-$1$ subvariety of $\G_m^n$ (and the commutativity of the corresponding tame symbol maps). We therefore obtain an isomorphism 
        \[
            \ker \left(K_n^{(0)}\to K_{n-1}^{(0)}\right)\xrightarrow{\partial_{\{z_n=0\}}} \ker\left(K_{n-1}^M(\Q(z_1,\ldots, z_{n-1}))^{(0)}\xrightarrow{\partial} \bigoplus_{x\in (\G_m^{n-1})^{[1]}} K_{n-2}^M(k(x))\right) 
        \]
        whose inverse is cup product with $-z_n$, and the result follows by the inductive hypothesis.
        
    \end{proof}

    Note that in \cite{X2}, we actually consider realizations of spherical chains/Steinberg-type symbols in trace-fixed parts of colimits of motivic cohomology groups of complements of toric hypersurface arrangements, instead of passing to generic points and taking Milnor $K$-theory. On trace-fixed parts, these should also be isomorphic to the spaces of symbols, by a somewhat more careful induction argument using coniveau spectral sequences as in \cite{SV}, though we do not write this here. We expect that this even holds for finite hyperplane complements and finite submodules of $C(n)$, by the same methods. This observation came from discussions with Romyar Sharifi.

    \printbibliography
\end{document}